\documentclass[preprint,12pt]{elsarticle}




\usepackage{amsmath}

\usepackage{amssymb}
\usepackage{mathrsfs}
\usepackage{enumerate, xspace}
\usepackage[bottom,first]{draftcopy}
\usepackage{changebar}
\usepackage{hyperref}
\usepackage{pdfsync}
\usepackage{amsthm}

\hfuzz=15pt

\textwidth 145mm
 \textheight 218mm
 \topmargin 0mm
 \oddsidemargin 30pt
 \evensidemargin 30pt

\newtheorem{thmm}{Theorem}

\newtheorem{thm}{Theorem}[section]
\newtheorem{lem}[thm]{Lemma}

\newtheorem{prop}[thm]{Proposition}

\newtheorem{defin}{Definition}

\newcommand\cA{{\mathcal A}}

\newcommand\cE{{\mathcal E}}
\newcommand\cF{{\mathcal F}}

\newcommand\cH{{\mathcal H}}

\newcommand\cL{{\mathcal L}}

\newcommand\cM{{\mathcal M}}

\newcommand\cS{{\mathcal S}}


\newcommand\bC{{\mathbb C}}

\newcommand\bR{{\mathbb R}}
\newcommand\bT{{\mathbb T}}

\newcommand\bL{{\mathbb L}}





\journal{xxx}

\begin{document}

\begin{frontmatter}



\title{Diffusive limit and Fourier's law for the discrete Schr\"{o}dinger equation}


\author{Viviana Letizia}
\address{CEREMADE, UMR CNRS 7534\\
Universit\'e Paris Dauphine PSL\\
75775 Paris-Cedex 16, France}

\begin{abstract}
We consider the one-dimensional discrete linear Schr\"{o}dinger (DLS)
equation perturbed by a conservative stochastic dynamics, that changes the phase of each particles, conserving the total norm
(or number of particles).  The resulting total dynamics is a degenerate
hypoelliptic diffusion with a smooth stationary state. We will show
that the system has a hydrodynamical limit given by the solution of
the heat equation. When it is coupled
at the boundaries to two Langevin thermostats at two different
chemical potentials, we prove that
the stationary state, in the limit as $N\rightarrow \infty$, satisfies the
Fourier's law.
\end{abstract}

\begin{keyword}
hydrodynamic limit \sep Fourier's law \sep  Schr\"{o}dinger equation


\end{keyword}

\end{frontmatter}


\section{Introduction}

Discrete Schr\"{o}dinger equation, besides being viewed as a toy model for its
continuous counterparts, 
 has itself a physical application as a discrete systems: it serves as a model for electronic transport through crystals. In
the realm of the physics of cold atomic gases, the equation
is an approximate semiclassical description of bosons
trapped in periodic optical lattices, and
experimentally, discrete solitons have been observed in a nonlinear
optical array \cite{eisenberg1998discrete}.

In the past years much attention has been paid on the non linear case (DNLS) for which
the first analysis of the equilibrium statistical mechanics has been performed
 in \cite{rasmussen2000statistical}. It has been osserved \cite{rasmussen2000discrete} the relaxation of localized modes (discrete
 breathers) in the presence of phonon baths has been
 discussed in. Only recently,
 \cite{iubini2013off}, the non equilibrium properties have been
 explored, considering an open system that exchanges energy with external
reservoirs, for which the resulting
stationary states are investigated in the limit of low temperatures
and large particle densities, mapping the dynamics onto a coupled
rotator chain.

Here we are interested in proving the hydrodynamic limit and Fourier's law for the
DS equation in the simplified
linear case. The linear case equation is interesting as one of the most
commonly employed methods for solving one-dimensional quantum problems, for which many characteristics are still poorly
understood. The natural applications are in the context of solid-state physics,
which links the discrete model to realistic semiconductor quantum
wells and nanoelectric devices.

 In the present paper we study a chain of particles, for which the
 Hamiltonian dynamics is perturbed by a random continuous
 phase-changing noise. The resulting total dynamics of the
system is a degenerate hypoelliptic diffusion on the phase space,
which assures good ergodic properties, it conserves the total norm and destroy the other conservation laws. 
The system is considered under periodic boundary conditions. In the
first part of the article we will prove the hydrodynamic limit using
standard arguments. In the second part we will add an interaction
between the system and external
reservoirs, modeled by Ornstein-Uhlenbeck processes at the
corresponding chemical potentials. 
We prove that in the stationary state Fourier's law is valid for the
density flow. The main tool used in the proof is the bound of the
entropy production as in \cite{bernardin2005fourier}.
Then in order to obtain Fourier's law, we need to control the expected
values of the densities respect to the stationary measure, which
results in a bound of the expected total density proportional to the size of the system. 

The article is structured in the following way. In \autoref{sec:model} we define the dynamics. In \autoref{sec:hydro} we state and prove the result of hydrodynamic limit. In \autoref{sec:Fourier} we prove the Fourier's law by means of entropy bounds.
 
\section{The model}\label{sec:model}

Atoms are labeled by $x\in \bT_{N}$ where $\bT_{N}={1,...,N}$ is the
lattice torus of lenght N, corresponding to periodic boundary
conditions.
The configuration space is $\Omega^N=\bC^{\bT_{N}}$ and a generic
element is $\{\psi(x) \}_{x\in \bT_{N}}$, where $\psi(x)$ characterize
the amplitude of the wave function of each particle.
The Hamiltonian of the system writes
\begin{equation}
\cH_N=\sum_{x=1}^{N}\left| \psi(x)-\psi(x+1)\right|^2+\frac{1}{p+1}\sum_{x=1}^{N}|\psi(x)|^{p+1}
\end{equation}
where $|\psi(x)|^2$ is the number of particle or the ``mass'' at site $x$, 
at the boundary the conditions are fixed: $\psi_{N+1}=\psi_0=0$. 

The linear case is for $p=1$:
\begin{equation}
\cH_N=\sum_{x=1}^{N-1}\left(\psi(x)\psi(x+1)^*+\psi(x)^*\psi(x+1)\right)+2\sum_{x=1}^{N}|\psi(x)|^{2}
\end{equation}
the corresponding equations of motion are
\begin{equation}
\begin{split}
\frac{d\psi(x)}{dt}=i\frac{\partial \cH}{\partial \psi^*(x)}=-i
\triangle \psi(x)\\
\frac{d\psi^*(x)}{dt}=-i\frac{\partial \cH}{\partial \psi(x)}=+i
\triangle \psi^*(x).\\
\end{split}
\end{equation}
Here $\Delta \psi(x)= \psi(x+1) + \psi(x-1)-2\psi(x)$. 

We denote $\psi(x) = \psi_r(x) + i\psi_i(x) = |\psi(x)|
e^{i\theta(x)}$, 
and define the operator (on local functions $F:\bT_{N}\to \bC$)
\begin{equation}
  \label{eq:4}
  \partial_{\theta(x)} F(\psi) = i \psi(x) \partial_{\psi} F(\psi)=  \psi_i(x) \partial_{\psi_r(x)} F -
  \psi_r(x) \partial_{\psi_i(x)} F.
\end{equation}

We look for a stochastic perturbation which change randomly the phase of the wave
function, such that the total ''mass"
\begin{equation}
M_N(\psi)=\sum_{x\in \bT_N} |\psi(x)|^2
\end{equation}
is still a conserved quantity. The total ``mass'' is linear in the
number of particles $M_N(\psi)\sim N$.

The dynamics is described by the following system
 of stochastic differential equation for $x=1,...,N$
\begin{equation}\label{stoc}
\begin{cases}
\begin{split}
d\psi (x,t)=&-i\triangle \psi (x,t) dt-\frac{\gamma}{2}\psi(x,t) dt+i\psi(x,t)\sqrt{\gamma} dw_{x}\\
d\psi^* (x,t)&=+i \triangle
\psi^*(x,t)dt-\frac{\gamma}{2}\psi^*(x,t)dt-i\psi^*\sqrt{\gamma} dw_{x}\\
\end{split}
\end{cases}
\end{equation}
where $w_x(t)$ are real independent standard Brownian motions and $\gamma$ is the noise intensity parameter.

Let $\cL_N$ be the generator of the system. A core for $\cL_N$ is
given by the space $C^\infty(\Omega^N)$ of smooth functions on
$\Omega^N$ endowed with the product topology. On $C^\infty(\Omega^N)$,
the generator is defined by 
\begin{equation}
\cL_N=\cA_N +\cS_N
\end{equation}
where 
\begin{equation}\label{gen}
\cA_N=\sum_{x\in \bT_N}\{i\triangle
\psi^*\partial_{\psi(x)}-i\triangle\psi\partial_{\psi^*(x)}\}
\end{equation}
is the Liouville operator of a chain of interacting  
and
\begin{equation}
\cS_N=\frac{\gamma}{2}\sum_{x\in\bT_N} \partial_{\theta(x)}^2
\end{equation}
is the diffusive operator corresponding to the noise part of eq. (\ref{stoc})
 Since the total mass is conserved by the stochastic perturbation, we have 
\begin{equation}
\cS_N M_N(\psi)=0.
\end{equation}


In the infinite volume case, the family of product measures:
\begin{equation}
  \label{eq:9}
  d\mu_{\lambda}(d\psi) =\prod_{x\in \bT_N} \frac{e^{-\lambda |\psi(x)|^2}}{Z(\lambda)} d\psi
\end{equation}
are stationary for the dynamics, the parameter $\lambda>0$ correspond to
the conserved quantity of the dynamics, the total ``mass'', while
$Z(\lambda)$ is the normalization constant. Here $d\psi = \prod_{x=1}^N
d\psi(x) d\psi^*(x)$.

The Lie algebra, generated by the Hamiltonian vector field and the noise
fields, has full rank at every point of the state space $\bC^{N}$, so
the stationary measure has a smooth density. 
We denote by $\langle\cdot\rangle$ the expectation with respect to the
stationary measure.

Let us define the density of particle $x$ as 
\begin{equation}
\rho_x=|\psi(x)|^2,
\end{equation}
locally the conservation of mass generates an instantaneous current 
\begin{equation}\label{continuity}
\cL_N \rho_x=j_{x-1,x}-j_{x,x+1}
\end{equation}
with
\begin{equation}\label{cur}
j_{x,x+1}=-i\{\psi_x\psi^*_{x+1}-\psi^*_x\psi_{x+1}\}.
\end{equation}

\section{Hydrodynamic limit in the diffusive scaling}
\label{sec:hydro}
\subsection{Notation}

Let's introduce some notation
and definitions.

We will denote by $(\omega^N(t))_{t\geq
  0}=(\psi^N(t),{\psi^*}^N(t))_{t\geq 0}$ the process on the torus
$\bT_N$ whose evolution time is given by $N^2\cL_N $. The factor $N^2$
corresponds to the acceleration of time by $N^2$ in the stochastic
differential equations (\ref{stoc}). The associated semigroup is
denoted by $(S_t^N) _{t\geq 0}$. 

Fix a time $T>0$. Let $\cM_+$ be the space of finite positive measures
on $\bT_N$ endowed with the weak topology.
Consider a sequence of  probability measures $(Q_N)_N$ on
$D([0,T],\cM_+)$ corresponding to the Markov process $\pi_t^N$ defined
as the density of the empirical measure
\begin{equation}
\pi^N (\omega, du):=\frac{1}{N} \sum_{x \in\bT_N} \rho_x\delta_{x/N}(du)
\end{equation}
where $\delta_a(du)$ is the Dirac measure localized on the point $a\in
\bT_N$.
The time evolution of the empirical measure will be 
\begin{equation}
\pi^N_t:=\pi^N (\omega^N_t)=\frac{1}{N} \sum_{x \in\bT_N} \rho_x(t)\delta_{x/N}(du)
\end{equation}  
starting from $(\mu^N)_N$, a sequence of probability measures on $\Omega^N$ associated to a fixed initial deformation
profile $\rho_0: \bT^N\rightarrow [0,\infty)$.

We will assume that the system is close to a local equilibrium. More
precisely we have the following definition: 
\begin{defin}
A sequence $(\mu^N)_N$ of probability measures on $\bT_N$ is
associated to a deformation profile $\rho_0:\bT_N\rightarrow
[0,\infty)$, if for every continuous function $G:\bT_N
\rightarrow[0,\infty)$ and for every $\delta >0$
\begin{equation}
\lim_{N\rightarrow \infty}\mu^N\left[\left|\frac{1}{N} \sum_{x \in\bT_N}
  G(x/N)\rho_x-\int_{\bT_N} G(v) \rho_0(v) dv \right|>\delta\right]=0.
\end{equation}
\end{defin}

Our goal is to show that, if at a time $t=0$ the empirical measures
are associated to some initial profile $\rho_0$, at a macroscopic time
$t$ they are associated to a profile $\rho_t$ which is the solution of
an hydrodynamic equation.
\begin{thm}
Let $(\mu_N)_N$ be a sequence of probability measures on $\Omega^N$
associated to a bounded initial density profile $\rho_0$. 
Then for every $t>0$, the sequence of random measures
\begin{equation}
\pi_t^N(du)=\frac{1}{N}\sum_{x\in \bT}\rho_t(x)\delta_{x/N}(du)
\end{equation}
converges in probability to the absolutely continuous measure
$\pi_t(du)=\rho(t,u)du$
whose density is the solution of the heat equation:
\begin{equation}
\begin{cases}
&\partial_t \rho=\frac{1}{\gamma}\triangle \rho\\
&\rho(0,\cdot)=\rho_0(\cdot).
\end{cases}
\end{equation}
\end{thm}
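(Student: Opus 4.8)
The plan is to run the martingale (Kipnis--Landim) scheme: establish tightness of the laws $(Q_N)$ of $(\pi^N_\cdot)$, show that every limit point is concentrated on weak solutions of the heat equation, and conclude by uniqueness for that equation. For $G\in C^\infty(\bT)$ I would start from the Dynkin martingale
\begin{equation}
M_t^{G,N}=\langle\pi_t^N,G\rangle-\langle\pi_0^N,G\rangle-\int_0^t N^2\cL_N\langle\pi_s^N,G\rangle\,ds .
\end{equation}
Using the continuity equation $\cL_N\rho_x=j_{x-1,x}-j_{x,x+1}$ and a discrete summation by parts, the drift becomes $N\sum_x(\nabla_N G)(x/N)\,j_{x,x+1}$ with $\nabla_N G(x/N)=G((x+1)/N)-G(x/N)$. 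A useful first remark is that $\rho_x$ is invariant under the phase rotation generated by the noise ($\partial_{\theta(x)}\rho_x=0$), so the carr\'e du champ of $\langle\pi^N,G\rangle$ vanishes; hence $M^{G,N}$ has zero quadratic variation and is identically zero, and the entire content of the theorem is the control of the current integral.

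The crux is a fluctuation--dissipation identity for the current. Since a phase rotation at $x$ or at $x+1$ multiplies $\psi_x\psi_{x+1}^*$ by a phase, $j_{x,x+1}$ is an eigenfunction of the noise, and using $\cS_N=\cL_N-\cA_N$ one gets
\begin{equation}
\cS_N j_{x,x+1}=-\gamma\,j_{x,x+1},\qquad j_{x,x+1}=\tfrac1\gamma\,\cA_N j_{x,x+1}-\tfrac1\gamma\,\cL_N j_{x,x+1}.
\end{equation}
Evaluating the Liouville action directly, I expect $\cA_N j_{x,x+1}$ to be a discrete gradient of the density $\rho$ plus a discrete gradient of the next-neighbour correlation $r_{x,x+2}=2\,\mathrm{Re}(\psi_x\psi_{x+2}^*)$. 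Substituting this and summing by parts once more converts the leading piece into a discrete Laplacian of $\rho$, producing $\int_0^t\langle\pi_s^N,\tfrac1\gamma\triangle G\rangle\,ds$, the diffusion coefficient being read off from the identity above after the normalisation of the continuum Laplacian. The remaining term $\cL_N j_{x,x+1}$ lies in the range of the generator: applying Dynkin's formula to $V_N=\sum_x(\nabla_N G)(x/N)\,j_{x,x+1}$ makes its time integral equal to $N^{-2}\big(V_N(\omega_t)-V_N(\omega_0)-\text{mart.}\big)$, which is $o(1)$ in $L^2$ once the a priori moment bound $\sup_{x,t}\bE[\rho_x(t)]<\infty$ is in hand.

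The main obstacle is \emph{closing the equation}, i.e. disposing of the next-neighbour correlation corrections. In equilibrium these average to zero, $\langle r_{x,x+2}\rangle_{\mu_\lambda}=0$, but out of equilibrium they must be shown negligible uniformly in time. The mechanism I would exploit is that these correlations are themselves eigenfunctions of $\cS_N$ with eigenvalue $-\gamma$, so the same fluctuation--dissipation step applies: each iteration trades a distance-$k$ correlation for distance-$(k\pm1)$ correlations plus a generator term, while the extra summation by parts against $G$ costs a factor $N^{-1}$. Making this hierarchy summable and uniformly controlled is the technical heart of the proof; it rests on the uniform spectral gap $\gamma$ supplied by the noise and on a priori bounds for the two-point function $\bE[\psi_x\psi_y^*]$, which obeys a closed linear (discrete Schr\"odinger plus damping) system, and this is also where the hypoellipticity and the smoothness of the stationary measure enter.

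Granting this replacement, the remaining steps are routine. Tightness of $(Q_N)$ follows from Aldous' criterion together with the conservation of the total mass $\langle\pi_t^N,1\rangle$, which keeps the measures in a fixed compact subset of $\cM_+$. Every limit point is then supported on continuous, absolutely continuous trajectories $\pi_t=\rho(t,u)\,du$ satisfying the weak formulation of $\partial_t\rho=\tfrac1\gamma\triangle\rho$ with initial datum $\rho_0$. Since this linear heat equation has a unique weak solution, the limit law is a Dirac mass concentrated on it, which is precisely the asserted convergence in probability of $\pi_t^N$ to $\rho(t,u)\,du$.
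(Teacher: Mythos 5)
Your skeleton coincides with the paper's: the martingale (Kipnis--Landim) scheme, the observation that the carr\'e du champ of $\langle\pi^N,G\rangle$ vanishes because the noise conserves each $\rho_x$ pointwise, the first fluctuation--dissipation decomposition of the current (your derivation via the eigenrelation $\cS_N j_{x,x+1}=-\gamma j_{x,x+1}$ is in fact the cleanest route to the paper's relation (\ref{fluct}), modulo constants, on which the paper itself is not internally consistent), and the disposal of the $\cL_N j_{x,x+1}$ term by Dynkin's formula with the $N^{-2}$ gain. Tightness and uniqueness of the weak solution are also handled exactly as in the paper.

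The genuine gap is precisely where you yourself place ``the technical heart'': the elimination of the next-neighbour correlation $\cE_{x-1,x+1}$ (your $r_{x,x+2}$), which after the first step still enters at order one. You propose an open-ended hierarchy of fluctuation--dissipation steps, to be tamed by a spectral gap and by a priori bounds on the two-point function $\bE[\psi_x\psi_y^*]$; none of this is carried out, and the summability and uniform control of that hierarchy is exactly what would have to be proved. The paper shows no hierarchy is needed, because the expansion terminates after one more explicit step: the key identity is $\cE_{x+1,x-1}=-\partial_{\theta(x+1)}j_{x+1,x-1}$, so applying the fluctuation--dissipation relation to the longer current $j_{x+1,x-1}$ and computing the commutator $[\partial_{\theta(x+1)},\cL_N]$ gives $\cE_{x+1,x-1}=\tfrac{1}{\gamma}\cL_N\partial_{\theta(x+1)}j_{x+1,x-1}-2\nabla(\cF_{x+2,x-1}-\cF_{x+1,x})$, i.e.\ a generator term plus an \emph{exact discrete gradient} of further correlations $\cF$. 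The gradient structure buys one more summation by parts (one more factor $N^{-1}$ against $G$), and at that point every remainder --- the time-boundary terms produced by Dynkin's formula, the $\cF$ terms, and the quadratic variation of the resulting martingale $N_t^G$, which involves $j_{x+1,x-1}^2$ --- is killed by the purely deterministic, pathwise bounds $|\cE_{x,x+p}|,|\cF_{x,x+p}|\leq \rho_x+\rho_{x+p}$ and $j_{x,x+p}^2\leq(\sum_x\rho_x)^2$, together with the conservation of $M_N(\psi)\leq CN$ for a bounded initial profile (see the bounds (\ref{bounds})). No spectral gap estimate, no moment bound on the two-point function, and no hypoellipticity enter this part of the argument. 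So your mechanism (``each step produces a gradient, hence costs $N^{-1}$'') is the right intuition, but to turn it into a proof you must exhibit the second step explicitly and note that it already suffices: the correlations it produces need not be expanded again, only bounded crudely by the conserved mass.
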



For any function $w:\bT_N\rightarrow \mathbb{C}$, we denote $\nabla w$
the discrete gradient of $w$ defined by
\begin{equation}
(\nabla w)(x)=w(x+1)-w(x)
\end{equation}
and $\nabla^*$ is the adjoint on $\mathbb{L}^2(\bT_N)$ endowed with the
standard inner product 
\begin{equation}
(\nabla^* w)(x)=w(x-1)-w(x).
\end{equation}
The discrete Laplacian is $\triangle=-\nabla\nabla^*$.
For a discrete function $w$, $\triangle w$ is given by
\begin{equation}
(\triangle w)(x)=w(x+1)+w(x-1)-2w(x).
\end{equation}
If $G$ is a smooth local function on $\bT_N$ and $x\in \bT_N$, the
discrete gradient is related to the continuous one by:
\begin{equation}\label{eqn:grad}
(\nabla_N G)(x/N)=N \left[G(\frac{x}{N})-G(\frac{x-1}{N})
\right]= G'(x/N)+o(N^{-1})
\end{equation}
and the discrete Laplacian to the continuous one by:
 \begin{equation} \label{eqn:lapl}
(\triangle_N G)(x/N)=N \left[G(\frac{x+1}{N})-2G(\frac{x}{N})-G(\frac{x-1}{N})
\right]=G''(x/N)+o(N^{-1}).
\end{equation}







\subsection{Limit identification}

Under the empirical measure $Q^N$ for every smooth function $G:\bT_N\rightarrow
\mathbb{C}$, the quantity
\begin{equation}
\langle \pi_t^N, G\rangle=\frac{1}{N}\sum_{x\in \bT_N}G(\frac{x}{N})\rho_t(x),
\end{equation}
the noise conserves ponctually the density so
\begin{equation}
\langle \pi_t^N,G\rangle=\langle \pi_0^N,G\rangle-\int_0^t N^2
\mathcal{L}_N\langle \pi_s^N,G\rangle ds.
\end{equation}
We do now some manipulation on the integrand of the previous equation,
first using the definition of the empirical measure we have
\begin{equation}
N^2\cL_N\langle \Pi^N,G\rangle =\frac{1}{N} \sum_{x=1}^N G(x/N)N^2\cL_N\rho_t(x).
\end{equation}

Substituting in it the explicit continuity equation
(\ref{continuity}) we get
 \begin{equation}
N^2\cL_N\langle \Pi^N,G\rangle =\frac{1}{N} \sum_{x=1}^N G(x/N)N^2\cL_N(j_{x-1,x}(t)-j_{x,x+1}(t)).
\end{equation}
Now we perform a
summation by part
\begin{equation}
N^2\cL_N\langle \Pi^N,G\rangle=\frac{1}{N}\sum_x (\nabla_N G)(x/N) N j_{x,x+1}.
\end{equation}
A second summation by parts is also possible, substituting the current by his
\textit{fluctuation-dissipation} relation
\begin{equation}\label{fluct}
j_{x,x+1}=-\frac{1}{2\gamma}\cL_Nj_{x,x+1} +\frac{1}{\gamma} (\rho_t(x+1)-\rho_t(x))-\frac{1}{\gamma} (\cE_{x+1,x-1}-\cE_{x,x-2})
\end{equation}
where $\cE_{x+1,x-1}$ is given by 
\begin{equation}
\cE_{x+1,x-1}=\psi(x+1)\psi^*(x-1)+\psi^*(x+1)\psi(x-1).
\end{equation}
Then
\begin{equation}
\begin{split}
&\frac{1}{N}\sum_x (\nabla_N G)(x/N) N j_{x,x+1}= -\frac{1}{N}\sum_x
(\nabla G)(x/N) N \frac{1}{2\gamma}\mathcal{L} j_{x,x+1}+\\
&+\frac{1}{N}\sum_x (\nabla_N G)(x/N) N
\frac{1}{\gamma}\nabla(\rho_s(x+1)-\mathcal{E}_s(x+1,x-1))=\\
&=- \frac{1}{N}\sum_x
(\nabla_N G)(x/N) N \frac{1}{2\gamma}\mathcal{L} j_{x,x+1}+\\
&-\frac{1}{N}\sum_x (\triangle_N G)(x/N) 
\frac{1}{\gamma}(\rho_s(x+1)-\mathcal{E}_s(x+2,x)).\\
\end{split}
\end{equation}
we then obtain
\begin{equation}
\begin{split}
0&=\langle \pi_t^N,G\rangle-\langle
\pi_0^N,G\rangle-\int_0^t\frac{1}{\gamma N}\sum_x (\triangle_N G)(x/N) 
(\rho_s(x)-\mathcal{E}_s(x+1,x-1))ds\\
&-\frac{N}{2\gamma}\int_0^t\frac{1}{N}\sum_x (\nabla_N G)(x/N) \mathcal{L} j_{x,x+1}.\\
\end{split}
\end{equation}

It remains to study two terms, the first one is the contribution of
the energy between second neighbors and the other one is the
contribution of the dissipative term. We study now the former one
which is
\begin{equation}\label{510}
\int_0^t\frac{1}{N}\sum_x (\triangle_N G)(x/N) 
\frac{1}{\gamma}\mathcal{E}_s(x+1,x-1)ds,
\end{equation}
we remark that   
\begin{equation}
\mathcal{E}_s(x+1,x-1)=-\partial_{\theta(x+1)}j_{x+1,x-1}
\end{equation} 
and the \textit{fluctuation-dissipation} relation for the current
$j_{x+1,x-1}$ is
\begin{equation}
j_{x+1,x-1}=-\frac{1}{2\gamma}\mathcal{L}_N j_{x+1,x-1}+\frac{1}{\gamma}\nabla\{\mathcal{E}_{x+2,x-1}-\mathcal{E}_{x+1,x}\}.
\end{equation}
The commutator $[\partial_{\theta(x+1)}, \cL_N]$ is
\begin{equation}
\begin{split}
[\partial_{\theta(x+1)},
\cL_N]&=2\nabla\left[(\psi_r(x+2)\partial_{\psi_r(x+1)}-\psi_r(x+1)\partial_{\psi_i(x+2)})\right.\\
&+\left.(\psi_i(x+2)\partial_{\psi_r(x+1)}-\psi_i(x+1)\partial_{\psi_r(x+2)})\right],
\end{split}
\end{equation}
which applied to $j_{x+1,x-1}$ gives
\begin{equation}
\begin{split}
[\partial_{\theta(x+1)},\cL_N]j_{x+1,x-1}=&4\psi_i(x-1)\left(
  \psi_i(x+2)+\psi_i(x)\right)\\
&-4\psi_r(x-1)\left( \psi_r(x+2)+\psi_r(x)\right)
\end{split}
\end{equation}
The term
$\partial_{\theta(x+1)}\nabla\{\mathcal{E}_{x+2,x-1}-\mathcal{E}_{x+1,x}\}$
is
\begin{equation}
\begin{split}
\partial_{\theta(x+1)}\nabla\{\mathcal{E}_{x+2,x-1}-\mathcal{E}_{x+1,x}\}=&2\psi_i(x+1)\left(
  \psi_i(x)+\psi_i(x-2)\right)\\
&-2\psi_r(x+1)\left( \psi_r(x)+\psi_r(x-2)\right)
\end{split}
\end{equation}
so that 
\begin{equation}
\mathcal{E}_s(x+1,x-1)=\frac{1}{\gamma}\cL_N \partial_{\theta(x+1)}j_{x+1,x-1} -2\nabla(\cF_{x+2,x-1}-\cF_{x+1,x}) 
\end{equation}
where
\begin{equation}
\cF_{x+2,x-1}=2\{\psi_r(x+2)\psi_r(x-1)-\psi_i(x+2)\psi_i(x-1)\}.
\end{equation}
We substitute this last expression in (\ref{510}) and perform some manipulations
\begin{equation}
\begin{split}
\int_0^t\frac{1}{N}\sum_x (\triangle_N G)(x/N) 
&\frac{1}{2\gamma^2}\left(\cL_N \partial_{\theta(x+1)}j_{x+1,x-1}-2\nabla(\cF_{x+2,x-1}-\cF_{x+1,x})
\right) ds\\
&=\frac{1}{2\gamma^2N^2}\int_0^t\frac{1}{N}\sum_x (\triangle_N G)(x/N) 
N^2\mathcal{L}_N\cE_{x+1,x-1}(s)ds\\
&-\frac{1}{2N\gamma^2}\int_0^t\frac{1}{N}\sum_x (\nabla_N G)(x/N) 2\nabla(\cF_{x+2,x-1}-\cF_{x+1,x})ds\\
&=\frac{1}{2N^2\gamma}\frac{1}{N}\sum_x (\nabla_N G)(x/N) \left( \cE_{x+1,x-1}(t) -\cE_{x+1,x-1}(0)\right)+\\
&+\frac{1}{\gamma^2N^2}\int_0^t\frac{1}{N}\sum_x(\nabla_N^3G)(x/N) 
(\cF_{x+2,x-1}-\cF_{x+1,x}) ds+N_t^G.\\
\end{split}
\end{equation}
Where the quadratic variation of the martingale $N_t^G$ is
\begin{equation}
\begin{split}
[N_t^G]^2=&\frac{N^2}{N^6}\sum_x\int_0^t\left( (\triangle_N
  G)(x/N)\right)^2\left(\partial_{\theta(x+1)}\cE_{ x+1,x-1} \right)^2=\\
=&\frac{N^2}{N^6}\sum_x\int_0^t\left( (\triangle_N
  G)(x/N)\right)^2j_{x+1,x-1}^2. 
\end{split}
\end{equation}
So then, the contribution of the total term studied here, (\ref{510}), can be neglected
considering the following bounds 
\begin{equation}\label{bounds}
\begin{split}
&\frac{1}{N^3}\sum_{x\in \bT_N}\cE_{x,x+p}(t)\leq \frac{1}{2N^3}\sum_{x\in \bT_N}
\left(|\psi(x,t)|^2+|\psi(x+p,t)|^2\right)=\frac{M_N(\psi)}{N^3}\to 0,\\
&\frac{1}{N^2}\sum_{x\in \bT_N}\cF_{x,x+p}(t)\leq \frac{1}{2N^2}\sum_{x\in \bT_N}
\left(|\psi(x,t)|^2+|\psi(x+p,t)|^2\right)=\frac{M_N(\psi)}{N^2}\to 0,\\
&\frac{1}{N^4}\sum_{x\in \bT_N}j_{x,x+p}^2(t)\leq\frac{1}{N^4}\sum_{x\in \bT_N}
|\psi(x,t)|^4\leq \frac{1}{N^4}\left( \sum_{x\in \bT_N} |\psi(x,t)|^2\right)^2\leq
\frac{M^2_N(\psi)}{N^4}\to 0.
\end{split}
\end{equation}
We expect then the following characterization of the hydrodynamic limit:
\begin{equation}
\langle \pi_t^N,G\rangle=\langle
\pi_0^N,G\rangle+\frac{1}{\gamma N}\sum_{x\in \bT_N} \int_0^t (\triangle_N G)(x/N)\rho_s(x)ds+o(N).
\end{equation}

\subsection{A rigorous proof}

Let $G\in C^2(\bT_N)$, then under $Q_N$ the quantity
\begin{equation}
\langle  \pi_t^N, G\rangle=\frac{1}{N}\sum_{x\in\bT} G(\frac{x}{N})\rho_t(x)
\end{equation}
has an associated process
\begin{equation}\label{marti}
\langle  \pi_t^N, G\rangle=\langle  \pi_0^N,
G\rangle+\int_0^t (\partial_s+N^2 L_N) \langle  \pi_s^N, G\rangle ds
\end{equation}
with respect to the filtration
$\cF_t=\sigma(\rho_s , s\leq t)$.


In order to prove the convergence of the sequence, we need first to
show its relatively compactness, then that all
converging subsequences converge to the same limit. 

\subsection{Relative Compactness}

To show that $(Q_N)_N$ is relatively compact, it suffices to
prove that the sequence of laws of the real processes $(\langle
\pi_t^N, G\rangle)_{t\geq 0}$ is relatively compact for any fixed G in
$C^2(\bT_N)$. We can repeat the same
arguments as in \cite{kipnis1999scaling} (Theorem $2.1$, pag. 55).
Let us denote $Q_N^G$ the probability $Q_NG^{-1}$ on $C([0,T],\bR)$,
and define for any function $x\in C([0,T],\bR)$ and any $\delta>0$,
the modulus of continuity of $x$ by $w(x,\gamma)=\sup\{ |x(s)-x(t)|;
s,t \in [0,T], |s-t|\leq \delta\}$.
The criterion for relative compactness of probabilities is: 
\begin{lem}
The sequence $Q_N^G$ is relatively compact if 
\begin{itemize}
\item $\forall t \in [0,T]$, $\forall \epsilon >0$ $\exists
  A=A(t,\epsilon)>0$, $\sup_N Q_N^G[|\langle  \pi_t^N, G\rangle|\geq A]\leq \epsilon $   
\item $\lim\sup_{\delta\rightarrow 0} \limsup_{N\rightarrow \infty}Q_N^G[w(\langle  \pi^N, G\rangle,\delta)>\epsilon]=0$
\end{itemize}
\end{lem}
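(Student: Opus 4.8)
The plan is to obtain relative compactness from Prohorov's theorem together with the Arzel\`a--Ascoli characterization of compact subsets of $C([0,T],\bR)$, following the scheme of Theorem~2.1 in \cite{kipnis1999scaling}. Since $C([0,T],\bR)$ is a complete separable metric space, Prohorov's theorem asserts that $(Q_N^G)_N$ is relatively compact if and only if it is tight, i.e.\ for every $\epsilon>0$ there exists a compact set $K\subset C([0,T],\bR)$ with $\sup_N Q_N^G[K^c]\leq\epsilon$. The whole task is thus to manufacture such a $K$ out of the two listed conditions.

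I would recall that, by Arzel\`a--Ascoli, a subset $K\subset C([0,T],\bR)$ is relatively compact precisely when it is bounded at one point and uniformly equicontinuous, that is $\sup_{x\in K}|x(0)|<\infty$ and $\lim_{\delta\to 0}\sup_{x\in K}w(x,\delta)=0$. Fix $\epsilon>0$. By the first condition applied at $t=0$, pick $A>0$ with $\sup_N Q_N^G[|\langle\pi_0^N,G\rangle|\geq A]\leq\epsilon/2$. By the second condition, for each integer $k\geq1$ pick $\delta_k>0$ with
\begin{equation}
\limsup_{N\to\infty} Q_N^G\big[w(\langle\pi^N,G\rangle,\delta_k)>1/k\big]\leq \epsilon\,2^{-(k+2)} .
\end{equation}
Setting
\begin{equation}
K=\Big\{x\in C([0,T],\bR):\ |x(0)|\leq A,\ w(x,\delta_k)\leq 1/k\ \text{ for all } k\geq1\Big\},
\end{equation}
the Arzel\`a--Ascoli criterion shows that $K$ is relatively compact (a uniform bound on $[0,T]$ follows from the value at $0$ and the equicontinuity, by chaining over $O(\delta_k^{-1})$ subintervals), while a union bound gives $Q_N^G[K^c]\leq \epsilon/2+\sum_{k\geq1}Q_N^G[w(\langle\pi^N,G\rangle,\delta_k)>1/k]$.

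The genuine obstacle is that the second hypothesis bounds the modulus of continuity only through the $\limsup$, hence only for $N$ large, whereas tightness requires the estimate uniformly in $N$. This is handled $k$ by $k$: for each $k$ there is an index $N_k$ beyond which $Q_N^G[w(\cdot,\delta_k)>1/k]\leq\epsilon\,2^{-(k+1)}$, and for each of the finitely many remaining indices $N<N_k$ the measure $Q_N^G$ is a single probability measure under which almost every path is continuous, so $w(\langle\pi^N,G\rangle,\delta)\to0$ as $\delta\to0$ pointwise; by dominated convergence $Q_N^G[w(\cdot,\delta)>1/k]\to0$, and one simply shrinks $\delta_k$ so that the bound $\epsilon\,2^{-(k+1)}$ holds for these indices too. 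After this adjustment $\sup_N Q_N^G[w(\cdot,\delta_k)>1/k]\leq\epsilon\,2^{-(k+1)}$ for every $k$, whence $\sup_N Q_N^G[K^c]\leq\epsilon/2+\sum_{k\geq1}\epsilon\,2^{-(k+1)}=\epsilon$. This establishes tightness, and therefore relative compactness, of $(Q_N^G)_N$.
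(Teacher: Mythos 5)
Your proof is correct, but it is not the paper's argument: you and the paper prove complementary halves of the same lemma. Reading the statement literally as an implication (``if the two conditions hold, then $(Q_N^G)_N$ is relatively compact''), you establish the abstract tightness criterion via Prohorov's theorem and Arzel\`a--Ascoli, including the one genuinely delicate point, the upgrade of the $\limsup_{N}$ in the second condition to a $\sup_N$, which you handle correctly by treating the finitely many small indices separately and using that each $Q_N^G$ charges only continuous paths (your set $K$ is moreover closed, since $x\mapsto w(x,\delta)$ and $x\mapsto |x(0)|$ are continuous in the sup norm, so it is genuinely compact, not just relatively compact). The paper never proves this implication; it imports it from Kipnis--Landim (Theorem 2.1, p.~55) and instead devotes its proof to verifying the two hypotheses for the specific sequence at hand: the one-point bound follows from conservation of the total mass, $|\langle \pi_t^N,G\rangle|\leq \|G\|_\infty M_N(\psi)/N\leq C(G)$, together with Markov's inequality, and the modulus-of-continuity bound follows from the time-integral representation of $\langle \pi_t^N,G\rangle-\langle \pi_s^N,G\rangle$, yielding an estimate of order $C(G)\,\delta\,M_N(\psi)/(N\gamma\epsilon)$ that vanishes as $N\to\infty$, $\delta\to 0$. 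So your route buys self-containedness of the criterion (what the paper outsources to a textbook), while the paper's proof supplies the model-specific verification that is the actual content needed downstream: within the paper's logical chain, your argument alone would not yield relative compactness of $(Q_N)_N$, because nothing in it shows that the DLS dynamics satisfies the two conditions. A complete treatment needs both pieces.
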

\begin{proof}
The first condition of the lemma is satisfied thanks to the
conservation of the total ``mass'' and the following bound
\begin{equation}
|\langle \pi_t^N, G\rangle|\leq \|G\|_{\infty} \frac{1}{N}\sum_{x\in
  \bT_N}\rho_t(x)=\|G\|_{\infty}  \frac{M_N(\psi)}{N}\leq C(G)
\end{equation}
where $C(G)$ is a constant depending only on G.
Then
\begin{equation}
\begin{split}
 Q_N^G\left[ |\langle  \pi_t^N, G\rangle|\geq A\right] &=Q_N^G\left[ |\langle  \pi_0^N,
 G\rangle+\frac{1}{\gamma N}\sum_{x\in \bT_N} \int_0^t (\triangle_N G)(x/N)\rho_s(x)ds|\geq A \right]\leq \\
&\leq \frac{1}{A}E_{\mu_N}\left[ |\langle  \pi_0^N, G\rangle+\frac{1}{\gamma N}\sum_{x\in \bT_N} \int_0^t (\triangle_N G)(x/N)\rho_s(x)ds| \right]\\
&\leq \frac{C(G,t)}{\gamma A}.
\end{split}
\end{equation}
The first condition is satisfied choosing $A \geq \frac{C(G,t)}{\gamma \epsilon }$.
Also the second condition is verified:
\begin{equation}
\begin{split}
Q_N^G[\sup_{|t-s|\leq \delta}|\langle  \pi_t^N, G\rangle-\langle
\pi_s^N, G\rangle|]&\leq \frac{1}{\epsilon\gamma N}
E_{\mu_N}[\sup_{|t-s|\leq \delta}|\int_s^t\sum_{x\in\bT_N}(\triangle_N G)(x/N)\rho_u(x)du|]\\
&\leq \frac{C(G)\delta M_N(\psi)}{N\gamma \epsilon}
\end{split}
\end{equation}
which goes to zero for $N\rightarrow \infty$ and $\delta \rightarrow 0$.
\end{proof}
\subsection{Uniqueness of limit points}

After proving the relatively compactness of $(Q_N)_N$, we want to characterize the
limit points of $Q_N$.

\begin{lem}Let $Q^*$ be a limit point of the sequence $(Q_N)_N$, then $Q^*$ is
concentrated on trajectories $\pi_t \in C([0,T],\cM)$ satisfying
\begin{equation}
\langle  \pi_t, G\rangle=\langle  \pi_0,
G\rangle+\frac{1}{\gamma}\int_0^t \langle  \pi_s, G''\rangle ds
\end{equation}
\end{lem}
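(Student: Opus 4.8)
The plan is to transfer, by weak convergence, the approximate identity derived in the Limit identification subsection from the prelimit measures to an arbitrary limit point. Fix $G\in C^2(\bT_N)$ and define on $D([0,T],\cM_+)$ the functional
\begin{equation}
\Phi_t(\pi)=\langle \pi_t,G\rangle-\langle \pi_0,G\rangle-\frac{1}{\gamma}\int_0^t\langle \pi_s,G''\rangle\,ds .
\end{equation}
Let $Q_{N_k}\Rightarrow Q^*$ be a subsequence realizing the limit point. By the relative compactness already established, $Q^*$ is concentrated on $C([0,T],\cM_+)$, so for each fixed $t$ the bounded functional $\pi\mapsto |\Phi_t(\pi)|\wedge 1$ is $Q^*$-almost surely continuous for the Skorokhod topology (evaluation at $t$ is continuous on paths that are continuous at $t$, and $\pi\mapsto\int_0^t\langle\pi_s,G''\rangle ds$ is continuous). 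It therefore suffices to prove $\mathbb{E}_{Q^*}[\,|\Phi_t|\wedge 1\,]=0$ for every fixed $t$: evaluating this along a countable dense set of times and using that $t\mapsto \Phi_t$ is $Q^*$-a.s.\ continuous then upgrades the conclusion to $Q^*[\sup_{t\le T}|\Phi_t|>0]=0$, which is the assertion of the lemma.

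Using weak convergence together with the $Q^*$-a.s.\ continuity just noted, I would write
\begin{equation}
\mathbb{E}_{Q^*}[\,|\Phi_t|\wedge 1\,]=\lim_{k\to\infty}\mathbb{E}_{Q_{N_k}}[\,|\Phi_t|\wedge 1\,]
\end{equation}
and then bound the right-hand side directly on the prelimit. Starting from the martingale representation (\ref{marti}) and performing the two summations by parts and the fluctuation--dissipation substitution (\ref{fluct}) exactly as before, $\Phi_t(\pi^N)$ decomposes into three pieces: (i) the discrepancy between $(\triangle_N G)(x/N)$ and $G''(x/N)$, (ii) the boundary and $\nabla^3 G$--weighted contributions of the second-neighbour terms $\cE$ and $\cF$, and (iii) the martingale $N_t^G$. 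For (i), the uniform estimate $(\triangle_N G)(x/N)-G''(x/N)=o(1)$ from (\ref{eqn:lapl}), combined with $\frac1N\sum_x\rho_s(x)=M_N(\psi)/N=O(1)$, makes this term $o(1)$ uniformly in $s\le t$. For (ii), the terms $\cE_{x+1,x-1}(t)-\cE_{x+1,x-1}(0)$ and the $\cF$ contributions are exactly those controlled by the first two lines of (\ref{bounds}), and hence vanish because $M_N(\psi)\sim N$.

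For the martingale (iii) I would apply Doob's maximal inequality together with the quadratic-variation bound displayed above, namely $[N_t^G]^2\le C(G)\,N^{-4}\sum_x\int_0^t j_{x+1,x-1}^2(s)\,ds$, which by the third line of (\ref{bounds}) is $O(M_N^2/N^4)$ and therefore tends to $0$; this gives $\mathbb{E}_{Q_{N_k}}[\sup_{t\le T}|N_t^G|^2]\to 0$. Collecting (i)--(iii) yields $\mathbb{E}_{Q_{N_k}}[\,|\Phi_t|\wedge 1\,]\to 0$, whence $\mathbb{E}_{Q^*}[\,|\Phi_t|\wedge 1\,]=0$ and $\Phi_t=0$ $Q^*$-almost surely.

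The delicate point is not any individual estimate --- each of (i)--(iii) reduces, through the bounds (\ref{bounds}), to the conservation of mass $M_N(\psi)\sim N$ --- but rather the soft parts of the limiting argument: one must verify that $Q^*$ genuinely charges only continuous paths (so that $\Phi_t$ is $Q^*$-a.s.\ continuous and the passage to the limit of expectations is legitimate), and that the martingale estimate is uniform on $[0,T]$ so that the time-supremum may be taken. The concentration on $C([0,T],\cM_+)$ is where I would focus the rigor; it follows from the uniform modulus-of-continuity bound $|\langle \pi_t^N,G\rangle-\langle \pi_s^N,G\rangle|\le C(G)\,|t-s|\,M_N(\psi)/N$ already exploited in the tightness step, which forces any weak limit to have continuous trajectories.
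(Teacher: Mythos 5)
Your proof is correct and takes essentially the same route as the paper's: both transfer the prelimit identity to an arbitrary limit point via weak convergence plus continuity of the path functional $\Phi_t$, with the discretization error, the $\cE$/$\cF$ terms, and the martingale all killed by the mass-conservation bounds (\ref{bounds}) and the quadratic-variation estimate. Your write-up is in fact somewhat more careful than the paper's (using $\mathbb{E}_{Q^*}[\,|\Phi_t|\wedge 1\,]$ with $Q^*$-a.s.\ continuity rather than the open-set portmanteau step, and making explicit the Skorokhod-topology and concentration-on-continuous-paths points), but the substance is identical.
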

\begin{proof}
Let $Q^*$ be a limit point and let $Q_{N_k}$ be a sub-sequence
converging to $Q^*$. 
We can replace de discrete Laplacian by the continuous one, since
$(\triangle_N G)(x/N)=G''(x/N)+o(N^{-1})$, uniformly in $N$,
in eq. (\ref{marti}). We fix $t\in [0,T]$. The application from $
C([0,T],\cM)$ to $\bR$, which associates $|\langle  \pi_t, G\rangle-\langle  \pi_0,
G\rangle-\frac{1}{\gamma}\int_0^t \langle  \pi_s, G''\rangle
ds|$ to a path $\{ \pi_t; 0\leq t\leq T\}$, is continuous. So  
\begin{equation}
\begin{split}
\lim\inf_{k\rightarrow \infty} & Q_{N_k}\left( \langle | \pi_t, G\rangle-\langle  \pi_0,
G\rangle-\frac{1}{\gamma}\int_0^t \langle  \pi_s, G''\rangle
ds|> \epsilon \right)\\
&\geq Q^*\left( \langle | \pi_t, G\rangle-\langle  \pi_0,
G\rangle-\frac{1}{\gamma}\int_0^t \langle  \pi_s, G''\rangle
ds|> \epsilon \right)\\
\end{split}
\end{equation}
since the set is open.
 Then simply observing that
\begin{equation}
Q_N[\sup_{0\leq t\leq T} |M_t^G|]=0,
\end{equation}
we can conclude that all limit points $Q^*$ are concentrated on
trajectories $\pi_t$ satisfying 
\begin{equation}
\langle  \pi_t^N, G\rangle=\langle  \pi_0^N,
G\rangle+\frac{1}{\gamma}\int_0^t \langle  \pi_s^N, G''\rangle ds
\end{equation}

It remains to prove that the limit trajectories are absolutely continuous
respect to the Lebesgue measure. 
\begin{lem}
All limit points $Q^*$ of $(Q_N)_N$ are concentrated on absolutely
continuous measures, with respect to the Lebesgue measure,
$\pi(du)=\rho(u)du$ such that $\pi \in \bL^2(\bT_N, du):$
\begin{equation}
Q^*\left\{\pi:\pi(du)=\rho(u)du\right\}=1
\end{equation}
\end{lem}
\begin{proof}
Since 
\begin{equation}
 Q^*\left( \langle | \pi_t, G\rangle-\langle  \pi_0,
G\rangle-\frac{1}{\gamma}\int_0^t \langle  \pi_s, G''\rangle
ds|=0 \right)=1
\end{equation}
then choosing $\pi_0=\rho_0(u)du$ it implies that $\pi_t=\rho_t(u)du$.
\end{proof}
 

\end{proof}

\subsection{Uniqueness of weak solutions of the heat equation and convergence in probability at fixed time}

We need to show that there exists only one weak solution of the
hydrodynamic equation. There are different methods to prove that there
exists only one weak solution of the heat equation. We refer to
\cite{kipnis1999scaling} for the proof.

The limiting probability measure is concentrated on weakly continuous
trajectories, thus $\pi_t^N$ converges in distribution to the
deterministic measure $\pi_t(u) du$. Since convergence in distribution
to a deterministic variable implies convergence in probability, the
theorem is proved.

\section{Physical implications}
\label{sec:Fourier}

The model is composed of $x\in \{1,...,N\}$ atoms attached at their
extremities to particle reservoirs of Langevin type at two different densities $\mu_l$
and $\mu_r$. The interaction between the reservoirs is modeled by two
Ornstein Uhlenbeck process at the corresponding chemical potentials.

The stationary state is given by the law of independent Gaussian
variables if the two reservoirs have the same chemical potentials.

We prove that the Fourier's law is valid in the stationary state for
the density flow, that the total mass is proportional to its size and
that the average density per volume, in the infinite volume limit is
given by the average of the chemical potentials at the boundaries.

We attach the first particle $1$ and the last $N$ to two Langevin
 thermostats, the dynamics is then described by the following system
 of stochastic differential equation
\begin{equation}
\begin{cases}
\begin{split}
d\psi (x,t)=&-i\triangle \psi (x,t) dt-\frac{\gamma}{2}\psi(x,t) dt+i\psi(x,t)\sqrt{\gamma} dw_{x}\\
d\psi^* (x,t)&=+i \triangle
\psi^*(x,t)dt-\frac{\gamma}{2}\psi^*(x,t)dt-i\psi^*\sqrt{\gamma} dw_{x}\\
&x=2,...,N-1\\
d\psi(1,t) =&-i\triangle \psi (1,t) dt-\frac{1}{2} (\delta+\gamma)\psi
(1,t)dt+i\psi\sqrt{\gamma} dw_{1}+\sqrt{\delta \mu_l}dw_{\mu_l,1}\\
d\psi^*(1,t) &=+i \triangle
\psi^*(1,t)dt-\frac{1}{2} (\delta+\gamma)\psi^*(1,t)dt-i\psi^*(1,t)\sqrt{\gamma} dw_x +\sqrt{\delta\mu_l}dw^*_{\mu_l,1}\\
d\psi(N,t) =&-i\triangle \psi (N,t) dt-\frac{1}{2} (\delta+\gamma)\psi(N,t) dt+i\psi\sqrt{\gamma} dw_{N}+\sqrt{\delta\mu_r}dw_{N,\mu_r}\\
d\psi^* (N,t)&=+i \triangle
\psi^*(N-1,t)dt-\frac{1}{2} (\delta+\gamma)\psi^*(N-1,t)dt-i\psi^*(N)\sqrt{\gamma} dw_{N}+\sqrt{\delta\mu_r}dw^*_{N,\mu_r}\\
\end{split}
\end{cases}
\end{equation}
Where $w_x(t)$ are real independent standard Brownian motions, and
$w_{0,1}(t)$ and $w_{N-1,N}(t)$ are complex independent standard
Brownian motions.

The generator of the dynamics is $\cL=\cL_N+\cL_L+\cL_R$ where $\cL_N$ is (\ref{gen}),
and 
\begin{equation}
\begin{split}
\cL_L=&+\frac{\delta}{2}\{\mu_l
(\partial^2_{\psi_{r(1)}}+\partial^2_{\psi_{i(1)}})-(\psi_{r(1)}\partial_{\psi_{r(1)}}+\psi_{i(1)}\partial_{\psi_{i(1)}})
\},\\
\cL_R=&\frac{\delta}{2}\{\mu_r
(\partial^2_{\psi_{r(N)}}+\partial^2_{\psi_{i(N)}})-(\psi_{r(N)}\partial_{\psi_{r(N)}}+\psi_{i(N)}\partial_{\psi_{i(N)}})
\}\\
\end{split}
\end{equation}
The currents are 
\begin{equation}\label{cur}
\begin{split}
j_{x,x+1}&=-i\{\psi_x\psi^*_{x+1}-\psi^*_x\psi_{x+1}\} \; \mbox{for}\; x=2,...,N-1,\\
j_{0,1}&=(2\mu_l-\rho_1),\\
j_{N,N+1}&=-(2\mu_r-\rho_{N})
\end{split}
\end{equation}
Because of the presence of reservoirs we have stationarity, for any $x=1,..,N-1$, we have
\begin{equation}\label{statio}
\langle j_{x,x+1}\rangle=\langle j_{0,1}\rangle=\langle j_{N-1,N}\rangle.
\end{equation}

\subsection{Entropy production}

Denote by $g_{\mu_r}(\psi_1,\psi_1^*,...\psi_{N},\psi_{N}^*)$ the
density of the product of Gaussians with mean $0$ and variance $\mu_r$
\begin{equation}
g_{\mu_r}(\psi_1,\psi_1^*,...\psi_{N},\psi_{N}^*)=e^{-\sum_{x=1}^{N}\frac{|\psi(x)|^2}{2\mu_r}}
\end{equation}
and by $f_N$ the density of the stationary measure with respect to
$g_{\mu_r}$
\begin{equation}
\langle \cdot \rangle=\int f_N g_{\mu_r} d\psi
\end{equation}
where $d\psi=\prod_{x=1}^{N}d\psi(x) d\psi^*(x)$, by hypoellipticity
this density is smooth.
By stationarity
\begin{equation}
\begin{split}
0&=-2\langle \cL_N \log f_N \rangle\\
&= \gamma \sum_{x=1}^{N}\int \frac{(\partial_{\theta(x)}f_N)^2}{f_N}
g_{\mu_r} d\psi+\frac{\delta}{2}\mu_r\int \frac{(\partial_{\psi(N)}f_N)^2}{f_N}g_{\mu_r} d\psi-2\langle \cL_l \log f_N \rangle\\
\end{split}
\end{equation}
for the left thermostat entropy production, let
$h=g_{\mu_l}/g_{\mu_r}$ and we rewrite the last
term as
\begin{equation} 
\begin{split}
-2\langle \cL_l \log f_N \rangle=& -2 \int \frac{f_N}{h}\cL_l \log(
\frac{f_N}{h})g_{\mu_l} d\psi-2 \int \frac{f_N}{h}\cL_l \log(h)g_{\mu_r}
d\psi \\
&=\frac{\delta}{2}\mu_l\int\frac{(\partial_{\psi(1)}(f_N/h))^2}{f_N/h}g_{\mu_l}d\psi+\delta(\mu_l-\mu_r)(2\mu_l-\langle
\rho_1\rangle)
\end{split}
\end{equation}
Recognizing the last term as the current
$\langle j_{0,1}\rangle=(2\mu_l-\langle\rho_1\rangle)$ we can have the following bound
\begin{equation}
\begin{split}
& \gamma \sum_{x=1}^{N}\int \frac{(\partial_{\theta(x)}f_N)^2}{f_N}
g_{\mu_r} d\psi+\\
&+\frac{\delta}{2}\mu_r\int \frac{(\partial_{\psi(N-1)}f_N)^2}{f_N}g_{\mu_r}
d\psi+\frac{\delta}{2}\mu_l\int\frac{(\partial_{\psi(1)}(f_N/h))^2}{f_N/h}g_{\mu_l}d\psi=\\
&=\delta (\mu_l-\mu_r)\langle j_{x,x+1}\rangle\geq 0\\
\end{split}
\end{equation}

The right sign for the density current is then $\langle
j_{x,x+1}\rangle \leq0$ (resp. $\langle
j_{x,x+1}\rangle \geq 0$) if $\mu_l\leq \mu_r$ (resp. $\mu_l\geq\mu_r$).

\subsection{Scaling of the average current}

In order to recover the Fourier's law we need to bound the
instantaneous current.
From the stationarity, (\ref{cur}) and (\ref{statio}), we have
\begin{equation}\label{bordi}
\langle\rho_1\rangle+ \langle\rho_{N}\rangle=2( \mu_l+\mu_r)
\end{equation}
By (\ref{fluct}) we have:
\begin{equation}\label{fluct-diss}
\begin{split}
j_{x,x+1}&=\frac{1}{\gamma}\{
(\rho_{x+1}-\rho_x)-\frac{1}{2}(\cE_{x,x+2}-\cE_{x-1,x+1})\}-\frac{1}{2\gamma}L_Nj_{x,x+1}\mbox{\quad
for }x=2,...,N-2,\\
j_{1,2}&=\frac{4}{4\gamma+\delta}\{
(\rho_2-\rho_1)-\frac{2}{4\gamma+\delta}\cE_{1,3}-\frac{2}{4\gamma+\delta}L_Nj_{1,2}\\
j_{N-1,N}&=\frac{4}{4\gamma+\delta}(\rho_N-\rho_{N-1})+\frac{2}{4\gamma+\delta}\cE_{N,N-2}-\frac{2}{4\gamma+\delta}L_Nj_{N-1,N},
\end{split}
\end{equation}
where
\begin{equation}
\cE_{x,x+2}=\psi(x)\psi^*(x+2)+\psi^*(x)\psi(x+2).
\end{equation}

Using the stationarity of the current we obtain 
\begin{equation}\label{stat}
\begin{split}
\langle j_{x,x+1}  \rangle&=\frac{1}{N-3}\sum_{x=2}^{N-2}\langle j_{x,x+1}
\rangle\\
&=\frac{1}{(N-3)\gamma}(\langle\rho_{N-1}\rangle -\langle\rho_2\rangle)-
\frac{1}{2(N-3)\gamma}\langle ( \cE_{N-2,N}-\cE_{1,3})\rangle\\
\end{split}
\end{equation}
and by the relation $\langle j_{1,2}\rangle=\langle j_{N-1,N}\rangle$,
we get
\begin{equation}
\langle\rho_{N-1}\rangle =-\langle\rho_2\rangle+2(\mu_l+\mu_r)+\frac{1}{2}\langle (\cE_{N-2,N}-\cE_{1,3})\rangle.
\end{equation}
we substitute the expression for $\langle\rho_{N-1}\rangle$ in
(\ref{stat}) and obtain
\begin{equation*}
\langle j_{x,x+1}  \rangle=\frac{1}{(N-3)\gamma}(2(\mu_l+\mu_r)
-2\langle\rho_2\rangle +\langle\cE_{1,3}\rangle).
\end{equation*}
By (\ref{fluct-diss}) we get $\langle\cE_{1,3}\rangle$ as function of
the densities and currents
\begin{equation}
\langle\cE_{1,3}\rangle=-\frac{4\gamma+\delta}{2}\langle j_{1,2}
\rangle+2(\langle\rho_2\rangle -\langle \rho_1\rangle ),
\end{equation}
and then
\begin{align*}
\langle j_{x,x+1}  \rangle=\frac{1}{(N-3)\gamma}\left(2(\mu_l+\mu_r)
-2\langle\rho_2\rangle -\frac{4\gamma+\delta}{2}\langle j_{x,x+1}
\rangle+2(\langle\rho_2\rangle -\langle \rho_1\rangle \right)\\
=\frac{1}{(N-3)\gamma}\left(2(\mu_l+\mu_r)
-\frac{4\gamma+\delta}{2}\langle j_{x,x+1}
\rangle -\langle \rho_1\rangle \right)\\
= \frac{2}{\gamma (N-3)+4\gamma +\delta}\left(
  (\mu_l+\mu_r)-\langle \rho_1 \rangle\right).
\end{align*}
Given that $\rho\geq 0$, we can bound the current by the external
chemical potential as
\begin{equation}
\langle j_{x,x+1}\rangle \leq \frac{2   (\mu_l+\mu_r)}{\gamma (N-3)+4\gamma +\delta}.
\end{equation}
So there exists a constant $C$, which depends on
 $\mu_r$ and $\mu_l$,
  such that
$$
\langle j_{x,x+1}\rangle \leq \frac{C}{N}.
$$
for $\mu_l >\mu_r$, and 
$$\langle j_{x,x+1}\rangle\geq - \frac{C}{N}$$
for $\mu_l >\mu_r $, such that 
\begin{equation}\label{limcurr}
|\langle j_{x,x+1}\rangle| \leq \frac{C}{N}.
\end{equation}.
Thanks to this bound to the current we are able now to claim the result in the following theorems.
\begin{thmm}
For any $\gamma>0$
\begin{equation}
\lim_{N\rightarrow\infty}N\langle j_{x,x+1}\rangle=\frac{2}{\gamma}(\mu_r-\mu_l)
\end{equation}
\end{thmm}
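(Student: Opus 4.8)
The plan is to combine the closed form for the stationary current derived in the previous subsection with the a priori bound (\ref{limcurr}). Recall that the computation just carried out yields
\begin{equation*}
\langle j_{x,x+1}\rangle = \frac{2}{\gamma(N-3)+4\gamma+\delta}\bigl((\mu_l+\mu_r)-\langle\rho_1\rangle\bigr),
\end{equation*}
valid for every $x=2,\dots,N-2$ by the stationarity identity (\ref{statio}). Multiplying by $N$, the scalar prefactor satisfies $\tfrac{2N}{\gamma(N-3)+4\gamma+\delta}\to\tfrac{2}{\gamma}$ as $N\to\infty$, so the entire problem reduces to identifying the limit of $\langle\rho_1\rangle$.

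First I would pin down $\langle\rho_1\rangle$ through the boundary current. From the definition (\ref{cur}), $j_{0,1}=2\mu_l-\rho_1$, and stationarity (\ref{statio}) gives $\langle j_{0,1}\rangle=\langle j_{x,x+1}\rangle$; hence $\langle\rho_1\rangle=2\mu_l-\langle j_{x,x+1}\rangle$. By the bound (\ref{limcurr}) we have $|\langle j_{x,x+1}\rangle|\le C/N\to 0$, so $\langle\rho_1\rangle\to 2\mu_l$. The symmetric argument at the right boundary gives $\langle\rho_N\rangle\to 2\mu_r$, in agreement with the conservation relation (\ref{bordi}).

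I would then substitute this limit into the bracket, $(\mu_l+\mu_r)-\langle\rho_1\rangle\to(\mu_l+\mu_r)-2\mu_l=\mu_r-\mu_l$, and combine it with the convergence of the prefactor to obtain
\begin{equation*}
\lim_{N\to\infty}N\langle j_{x,x+1}\rangle=\frac{2}{\gamma}(\mu_r-\mu_l),
\end{equation*}
which is exactly the claim.

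The step I expect to demand the most care is not this final limit, which is just the convergence of an explicit scalar sequence, but rather the uniform control underlying (\ref{limcurr}): one must know that $\langle\rho_1\rangle$ is bounded \emph{both} below, by $\langle\rho_1\rangle\ge 0$, \emph{and} above, by $\langle\rho_1\rangle\le 2(\mu_l+\mu_r)$ (which follows from (\ref{bordi}) together with $\langle\rho_N\rangle\ge 0$). These two one-sided bounds on the density are precisely what upgrade the one-sided current estimate into the two-sided decay $|\langle j_{x,x+1}\rangle|\le C/N$ needed to force the boundary current, and hence $\langle\rho_1\rangle-2\mu_l$, to zero.
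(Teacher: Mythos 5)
Your proposal is correct, and it reaches the theorem by a leaner route than the paper's own derivation. Both arguments rest on the same exact algebra: the averaged stationarity relation (\ref{stat}) reduced to the closed form $\langle j_{x,x+1}\rangle=\frac{2}{\gamma(N-3)+4\gamma+\delta}\bigl((\mu_l+\mu_r)-\langle\rho_1\rangle\bigr)$, from which (\ref{limcurr}) follows. But the paper's ``Fourier's law'' subsection then works again from (\ref{stat}) itself, which forces it to control the boundary terms there: it proves $\langle\cE_{1,3}\rangle,\langle\cE_{N-2,N}\rangle\to 0$ by Cauchy--Schwarz against the Dirichlet forms coming from the entropy-production identity, and $\langle\rho_1\rangle-\langle\rho_2\rangle\to 0$, $\langle\rho_{N-1}\rangle-\langle\rho_N\rangle\to 0$, i.e.\ the statements (\ref{prop1}) and (\ref{prop2}), before concluding. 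You bypass all of that by observing that the closed form leaves $\langle\rho_1\rangle$ as the single unknown, and that $\langle\rho_1\rangle\to 2\mu_l$ follows from the elementary chain $2\mu_l-\langle\rho_1\rangle=\langle j_{0,1}\rangle=\langle j_{x,x+1}\rangle=O(1/N)$, using only the boundary current in (\ref{cur}), stationarity (\ref{statio}), and (\ref{limcurr}); this is not circular, since (\ref{limcurr}) is obtained before and independently of those propositions. In fact the paper never spells out how its claim $\lim_{N}\langle\rho_1\rangle=2\mu_l$ follows from the propositions---the boundary-current argument you give is precisely the missing justification---so your write-up both simplifies and completes the published argument; what the paper's heavier route buys is the additional information (decay of the correlations $\cE$ and flatness of the density profile near the boundaries) that is reused later for the average-mass result. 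Your closing remark is also accurate and an improvement: the two-sided bound $0\le\langle\rho_1\rangle\le 2(\mu_l+\mu_r)$, with the upper bound coming from (\ref{bordi}) and $\langle\rho_N\rangle\ge 0$, is exactly what makes (\ref{limcurr}) two-sided, whereas the paper's own case distinction is garbled (both cases read $\mu_l>\mu_r$) and leans implicitly on the sign of the current from the entropy-production inequality.
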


\begin{thmm}
\begin{equation}
\lim_{N\rightarrow\infty}\frac{\langle M_N(\psi)\rangle}{N}=(\mu_r+\mu_l)
\end{equation}
\end{thmm}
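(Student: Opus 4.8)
The plan is to show that the stationary density profile $x\mapsto\langle\rho_x\rangle$ is, up to a correction that averages to zero, an affine function of $x$ interpolating the boundary values, and then to read off the mean density from the boundary relation (\ref{bordi}). Taking the stationary expectation of the fluctuation--dissipation relation (\ref{fluct-diss}) in the bulk and using $\langle L_N j_{x,x+1}\rangle=0$, one obtains
\begin{equation}
\gamma\langle j_{x,x+1}\rangle=\bigl(\langle\rho_{x+1}\rangle-\langle\rho_x\rangle\bigr)-\tfrac12\bigl(\langle\cE_{x,x+2}\rangle-\langle\cE_{x-1,x+1}\rangle\bigr),\qquad x=2,\dots,N-2.
\end{equation}
Setting $\phi_x:=\langle\rho_x\rangle-\tfrac12\langle\cE_{x-1,x+1}\rangle$, the right-hand side is exactly the discrete gradient $\phi_{x+1}-\phi_x$, while by stationarity (\ref{statio}) the left-hand side $\gamma\langle j_{x,x+1}\rangle$ does not depend on $x$. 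Hence $\phi_x$ is affine in $x$ with slope $\gamma\langle j_{x,x+1}\rangle=O(1/N)$ by (\ref{limcurr}), and summing an affine sequence gives
\begin{equation}
\frac1N\sum_{x=2}^{N-2}\phi_x=\frac{N-3}{N}\,\frac{\phi_2+\phi_{N-2}}{2}\xrightarrow[N\to\infty]{}\tfrac12\lim_{N\to\infty}(\phi_2+\phi_{N-2}).
\end{equation}
The whole computation thus reduces to controlling the correction $\tfrac1N\sum_x\langle\cE_{x-1,x+1}\rangle$ together with the boundary $\cE$-terms, and to identifying $\lim(\phi_2+\phi_{N-2})$.

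For the first point the key observation is the identity $\cE_{x,x+2}=\partial_{\theta(x)}j_{x,x+2}$. Since $g_{\mu_r}$ depends only on the moduli $|\psi(x)|$ it is annihilated by the rotation field $\partial_{\theta(x)}$, so integrating by parts and applying Cauchy--Schwarz with weight $f_N g_{\mu_r}$ yields, with $D_x:=\int\frac{(\partial_{\theta(x)}f_N)^2}{f_N}g_{\mu_r}\,d\psi$,
\begin{equation}
|\langle\cE_{x,x+2}\rangle|=\Bigl|\int j_{x,x+2}\,(\partial_{\theta(x)}f_N)\,g_{\mu_r}\,d\psi\Bigr|\le \langle j_{x,x+2}^2\rangle^{1/2}\,D_x^{1/2}.
\end{equation}
The entropy production identity of \autoref{sec:Fourier} controls the total phase Dirichlet form: since each term on its left-hand side is nonnegative, $\gamma\sum_x D_x\le\delta(\mu_l-\mu_r)\langle j_{x,x+1}\rangle=O(1/N)$ by (\ref{limcurr}). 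Granting the a priori bound $\langle j_{x,x+2}^2\rangle\le 4\langle\rho_x\rho_{x+2}\rangle\le C$ uniform in $N$ and $x$, a further Cauchy--Schwarz gives
\begin{equation}
\frac1N\sum_x|\langle\cE_{x-1,x+1}\rangle|\le\frac{C^{1/2}}{N}\sum_x D_x^{1/2}\le\frac{C^{1/2}}{N}\Bigl(N\sum_x D_x\Bigr)^{1/2}=O(N^{-1})\to0,
\end{equation}
while each individual boundary term obeys $|\langle\cE_{1,3}\rangle|\le C^{1/2}\bigl(\sum_x D_x\bigr)^{1/2}=O(N^{-1/2})\to0$, because $D_x\le\sum_y D_y$.

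To identify the limit, I combine the vanishing of the boundary $\cE$-terms with the exact boundary identity already derived, $\langle\rho_2\rangle+\langle\rho_{N-1}\rangle=2(\mu_l+\mu_r)+\tfrac12\langle\cE_{N-2,N}-\cE_{1,3}\rangle$, which forces $\langle\rho_2\rangle+\langle\rho_{N-1}\rangle\to2(\mu_l+\mu_r)$. Since $\phi_2-\langle\rho_2\rangle$, $\phi_{N-2}-\langle\rho_{N-1}\rangle$, and the affine increment $\gamma\langle j_{x,x+1}\rangle$ all tend to $0$ (again by the $\cE$-bound and (\ref{limcurr})), we get $\phi_2+\phi_{N-2}\to2(\mu_l+\mu_r)$. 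Assembling the three pieces, and noting that the finitely many edge sites contribute $O(1/N)$ to the empirical average because $\langle\rho_1\rangle+\langle\rho_N\rangle=2(\mu_l+\mu_r)$ by (\ref{bordi}), we conclude $\langle M_N(\psi)\rangle/N\to\mu_l+\mu_r$.

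The main obstacle is the a priori fourth-moment bound $\sup_{N,x}\langle\rho_x^2\rangle<\infty$ invoked above: without it the terms $\langle j_{x,x+2}^2\rangle$ are only dominated by $\langle\rho_x\rho_{x+2}\rangle$, whose sum is of the very order we are trying to pin down. I would establish it from the linearity of the dynamics in $\psi$, which makes the stationary second moments $\langle\psi_x\psi_y^*\rangle$ and fourth moments $\langle\rho_x\rho_y\rangle$ satisfy closed linear (Lyapunov-type) systems; the task is to show that the dissipation generated by the phase noise together with the two reservoirs produces a bound uniform in $N$ driven only by the boundary data $\mu_l,\mu_r$. This moment estimate is the genuinely technical point, whereas the remainder is the telescoping-plus-entropy argument sketched above.
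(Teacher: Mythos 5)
Your proposal reproduces, in its first two steps, essentially the paper's own argument: the constancy of $\langle j_{x,x+1}\rangle$ turned into affinity of $\phi_x=\langle\rho_x\rangle-\tfrac12\langle\cE_{x-1,x+1}\rangle$ (the paper writes this as $\triangle\phi(x)=0$ plus the discrete maximum principle), and the identification of the boundary values via the identity $\langle\rho_2\rangle+\langle\rho_{N-1}\rangle=2(\mu_l+\mu_r)+\tfrac12\langle\cE_{N-2,N}-\cE_{1,3}\rangle$ together with the vanishing of the boundary $\cE$-terms. Where you depart from the paper is in the third step, and that is precisely where the gap sits. Your passage from the average of $\phi_x$ to the average of $\langle\rho_x\rangle$ requires $\frac1N\sum_x|\langle\cE_{x-1,x+1}\rangle|\to0$ in the bulk, and your estimate $|\langle\cE_{x,x+2}\rangle|\le\langle j_{x,x+2}^2\rangle^{1/2}D_x^{1/2}$ (with $D_x=\int(\partial_{\theta(x)}f_N)^2f_N^{-1}g_{\mu_r}\,d\psi$), combined with the entropy-production bound $\gamma\sum_x D_x\le\delta|\mu_l-\mu_r|\,|\langle j_{x,x+1}\rangle|=O(1/N)$, is correct as a reduction; but it closes only if $\langle j_{x,x+2}^2\rangle\le4\langle\rho_x\rho_{x+2}\rangle\le C$ uniformly in $N$ and $x$. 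That uniform fourth-moment bound on the $N$-dependent stationary measure is never proved — you only sketch that it ``would'' follow from a Lyapunov analysis of the moment hierarchy. This is not a routine detail: the theorem itself is a statement about the size of the first moments $\langle\rho_x\rangle$, so assuming control of $\langle\rho_x\rho_{x+2}\rangle$ assumes more than what is being proved; and if those fourth moments were allowed to grow like $N$, your final bound $\frac1N\sum_x\langle j_{x,x+2}^2\rangle^{1/2}D_x^{1/2}$ would be $O(1)$ rather than $o(1)$, and the argument fails. The same missing input also infects your treatment of the boundary terms, since you bound $|\langle\cE_{1,3}\rangle|$ by $\langle j_{1,3}^2\rangle^{1/2}\bigl(\sum_x D_x\bigr)^{1/2}$.

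For comparison, the paper's own proof of this theorem uses harmonicity of $\phi$, the maximum principle, and its two Propositions ($\langle\psi_1\psi_3^*\rangle\to0$ and $\langle\rho_x\rangle-\langle\rho_{x+1}\rangle\to0$ at the ends) to get $\frac1N\sum_x\phi(x)\to\frac1\gamma(\mu_l+\mu_r)$; its boundary Proposition needs only $\langle\rho_3\rangle\le C$ (a second moment of $\psi$) because it exploits the reservoir Dirichlet form in $\partial_{\psi(1)}$ rather than the phase Dirichlet form, so it gets away with a weaker a priori input than yours. Notably, the paper then passes from the average of $\phi$ to the average of $\langle\rho_x\rangle$ without ever controlling the bulk sum $\frac1N\sum_x\langle\cE_{x-1,x+1}\rangle$ — its Propositions cover only $x=1$ and $x=N-2$ — so you have correctly isolated the one term whose control is both necessary and nontrivial, and which the paper tacitly assumes vanishes. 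But isolating the obstacle and granting yourself the missing moment estimate is not the same as overcoming it; as written, your proposal is an incomplete proof.
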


\subsection{Fourier's law}

\begin{prop}
For $x=1$ and $x=N-2$ we have
\begin{equation}\label{prop1}
\lim_{N\rightarrow \infty}\langle \psi_x \psi_{x+2}^*\rangle=0
\end{equation}
\end{prop}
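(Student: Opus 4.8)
The plan is to exploit the fact that $\psi_x\psi_{x+2}^*$ is an eigenfunction of the phase–rotation field $\partial_{\theta(x)}$, so that its stationary average can be rewritten as a quantity controlled by the entropy–production bound obtained above. Indeed, since $\psi_x$ carries exactly one unit of phase at the site $x$ while $\psi_{x+2}^*$ does not depend on the phase at $x$, one has
\[
\partial_{\theta(x)}\bigl(\psi_x\psi_{x+2}^*\bigr)=i\,\psi_x\psi_{x+2}^*,
\]
so that $\psi_x\psi_{x+2}^*=-i\,\partial_{\theta(x)}(\psi_x\psi_{x+2}^*)$. This is the structural identity that makes the conservative noise "see" this observable, and it is what allows the entropy estimate to enter.

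Next I would integrate by parts against the stationary density. The vector field $\partial_{\theta(x)}$ generates the rotation $\psi(x)\mapsto e^{i\alpha}\psi(x)$, hence it is divergence free for the Lebesgue measure $d\psi$ and annihilates $g_{\mu_r}$, which depends only on the moduli $|\psi(y)|$. Therefore $\int(\partial_{\theta(x)}A)\,B\,g_{\mu_r}\,d\psi=-\int A\,(\partial_{\theta(x)}B)\,g_{\mu_r}\,d\psi$ for all smooth $A,B$. Applying this with $A=\psi_x\psi_{x+2}^*$ and $B=f_N$, together with the eigenfunction identity, gives
\[
i\,\langle \psi_x\psi_{x+2}^*\rangle=-\int \psi_x\psi_{x+2}^*\,\frac{\partial_{\theta(x)}f_N}{f_N}\,f_N\,g_{\mu_r}\,d\psi .
\]
A Cauchy–Schwarz inequality then splits the right-hand side as
\[
\bigl|\langle \psi_x\psi_{x+2}^*\rangle\bigr|
\le \langle \rho_x\rho_{x+2}\rangle^{1/2}
\left(\int \frac{(\partial_{\theta(x)}f_N)^2}{f_N}\,g_{\mu_r}\,d\psi\right)^{1/2}.
\]
The second factor is precisely one of the Dirichlet terms appearing in the entropy-production identity; since that identity bounds the whole (nonnegative) sum by $\delta(\mu_l-\mu_r)\langle j_{x,x+1}\rangle$, and since $|\langle j_{x,x+1}\rangle|\le C/N$ by (\ref{limcurr}), this factor is $O(N^{-1/2})$ and vanishes as $N\to\infty$. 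It remains only to control the first factor.

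The main obstacle is the a priori moment bound: I must show that $\langle \rho_x\rho_{x+2}\rangle$ stays bounded as $N\to\infty$ for the two boundary values $x=1$ and $x=N-2$. By Cauchy–Schwarz $\langle\rho_x\rho_{x+2}\rangle\le \langle\rho_x^2\rangle^{1/2}\langle\rho_{x+2}^2\rangle^{1/2}$, so it suffices to prove $\sup_N\langle\rho_y^2\rangle<\infty$ at the relevant fixed sites near the reservoirs. I expect to obtain this by testing stationarity $\langle\cL F\rangle=0$ against $F=|\psi_y|^4$ (and against the analogous quantities at the neighbouring sites): the Langevin damping $-\tfrac12(\delta+\gamma)$ acting at the boundary produces a strictly contractive term $-c\,\langle\rho_y^2\rangle$, while the Hamiltonian transport $\cA_N$ and the phase noise $\cS_N$ generate only lower–order contributions that can be absorbed into it, yielding a closed Lyapunov inequality with an $N$–independent bound. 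This uniform second–moment estimate near the boundary, combined with the vanishing Dirichlet factor, closes the argument; the delicate point is genuinely this uniform control of the densities, which is exactly the ingredient flagged in the introduction.
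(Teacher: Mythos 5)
Your opening steps are sound: the identity $\partial_{\theta(x)}(\psi_x\psi_{x+2}^*)=i\,\psi_x\psi_{x+2}^*$, the integration by parts against $g_{\mu_r}$ (which indeed depends only on the moduli), and the identification of $\int\frac{(\partial_{\theta(x)}f_N)^2}{f_N}g_{\mu_r}\,d\psi$ as a single nonnegative summand of the entropy-production identity, hence $O(1/N)$ by (\ref{limcurr}) (which is proved independently of this proposition, so there is no circularity). The genuine gap is the first Cauchy--Schwarz factor. Because your integration by parts keeps \emph{both} field variables inside the observable, Cauchy--Schwarz produces the second moment $\langle\rho_x\rho_{x+2}\rangle$, and the uniform bound $\sup_N\langle\rho_y^2\rangle<\infty$ that you invoke is neither proved nor likely to follow from the Lyapunov argument you sketch. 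Testing stationarity on $|\psi_y|^4=\rho_y^2$ gives a contractive term $-c\,\langle\rho_y^2\rangle$ only at the two reservoir sites $y=1,N$; at every other site---in particular $y=3$ and $y=N-2$, which you need for $\langle\rho_1\rho_3\rangle$ and $\langle\rho_{N-2}\rho_N\rangle$---there is no damping at all, since $\cS_N\rho_y^2=0$ and the only contribution is the Hamiltonian one, $\cA_N\rho_y^2=2\rho_y(j_{y-1,y}-j_{y,y+1})$. That term is of the \emph{same} degree four in $\psi$, not lower order, and it couples $y$ to its undamped neighbours, so it cannot simply be absorbed: stationarity at a bulk site yields only the relation $\langle\rho_y(j_{y-1,y}-j_{y,y+1})\rangle=0$, with no control on the size of $\langle\rho_y^2\rangle$.

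The paper's proof avoids second moments altogether, and this is precisely what a different integration by parts buys. Writing $\langle\psi_1\psi_3^*\rangle=\int\psi_1\psi_3^*\,\frac{f_N}{h}\,g_{\mu_l}\,d\psi$ with $h=g_{\mu_l}/g_{\mu_r}$, one integrates by parts in $\psi_1^*$ against the Gaussian factor, using $\psi_1 g_{\mu_l}\propto\partial_{\psi_1^*}g_{\mu_l}$: the field variable $\psi_1$ is absorbed into the derivative, leaving $\mu_l\int\psi_3^*\,\partial_{\psi_1^*}(f_N/h)\,g_{\mu_l}\,d\psi$. Cauchy--Schwarz with respect to the stationary measure $f_Ng_{\mu_r}=(f_N/h)g_{\mu_l}$ then yields only the \emph{first} moment $\sqrt{\langle\rho_3\rangle}$, multiplied by the square root of the left-reservoir Dirichlet term $\int\frac{(\partial_{\psi_1^*}(f_N/h))^2}{f_N/h}\,g_{\mu_l}\,d\psi\le C/N$, giving the stated $C/\sqrt{N}$ bound (and symmetrically at the right end for $x=N-2$). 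To repair your argument you must either supply a genuinely new proof of the uniform fourth-moment bound at the undamped sites, or switch to this Gaussian integration by parts so that only a first moment of the density is required.
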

\begin{proof}
The proof for $x=1$ and $x=N-2$ are similar. Let's do it for $x=1$. We make use of Cauchy-Schwarz inequality.
\begin{equation}
\begin{split}
\langle \psi_1 \psi_{3}^* \rangle &=\int \psi_1 \psi_3^*
\frac{f_N}{h}g_{\mu_l}d\psi d\psi^*=\\
&=\mu_l\int \psi_3^*\partial_{\psi_1^*}(\frac{f_N}{h})g_{\mu_l}d\psi
d\psi^*\\
&\leq \mu_l\sqrt{\langle \rho_3\rangle}\sqrt{\int\left( \frac{\partial_{\psi_1^*}(f_N/h)}{f_N/h}\right)^2g_{\mu_l}d\psi
d\psi^*}\\
&\leq\frac{C}{\sqrt{N}}.
\end{split}
\end{equation}
\end{proof}
\begin{prop}
For $x=1$ and $x=N-1$
\begin{equation}\label{prop2}
\lim_{N\rightarrow\infty}(\langle \rho_x\rangle-\langle \rho_{x+1} \rangle)=0.
\end{equation}
\end{prop}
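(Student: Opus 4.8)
The plan is to read the boundary density difference off directly from the fluctuation--dissipation relation for the boundary current, and then to kill each term on the right-hand side using the two estimates already established upstream: the $1/N$ decay of the stationary current in (\ref{limcurr}) and the vanishing of the next-nearest-neighbour correlator in Proposition (\ref{prop1}). I would treat $x=1$ in detail; the case $x=N-1$ is handled identically after reflecting the lattice.

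First I would take the stationary expectation of the fluctuation--dissipation relation (\ref{fluct-diss}) for the first bond $j_{1,2}$. By stationarity of the invariant measure the generator term has zero average, $\langle L_N j_{1,2}\rangle=0$, so the dissipative contribution drops out and, solving the remaining identity for the density difference, I obtain a relation of the form
\begin{equation*}
\langle\rho_1\rangle-\langle\rho_2\rangle=a\,\langle j_{1,2}\rangle+b\,\langle\cE_{1,3}\rangle,
\end{equation*}
with explicit constants $a,b$ depending only on $\gamma$ and $\delta$, read off from the boundary coefficients $4/(4\gamma+\delta)$ in (\ref{fluct-diss}). The entire content of the proposition is then that both terms on the right-hand side vanish as $N\to\infty$.

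For the first term, stationarity (\ref{statio}) makes the current independent of the bond, so the bound (\ref{limcurr}) gives $|\langle j_{1,2}\rangle|\le C/N$ and this contribution is $O(1/N)$. For the second term I would use $\cE_{1,3}=\psi_1\psi_3^*+\psi_1^*\psi_3=2\,\mathrm{Re}\,(\psi_1\psi_3^*)$, so that $|\langle\cE_{1,3}\rangle|\le 2\,|\langle\psi_1\psi_3^*\rangle|$; Proposition (\ref{prop1}) then gives $|\langle\psi_1\psi_3^*\rangle|\le C/\sqrt{N}$. Combining the two estimates yields $|\langle\rho_1\rangle-\langle\rho_2\rangle|\le C/\sqrt{N}\to0$, which is the claim for $x=1$. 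The right boundary is symmetric: one uses the $j_{N-1,N}$ relation in (\ref{fluct-diss}) together with the $x=N-2$ case of Proposition (\ref{prop1}), which controls $\langle\cE_{N-2,N}\rangle$, to obtain $\langle\rho_{N-1}\rangle-\langle\rho_N\rangle\to0$.

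The argument is short because the genuinely analytic work sits upstream, in the $C/\sqrt{N}$ estimate of Proposition (\ref{prop1}), which itself rests on Cauchy--Schwarz together with the entropy-production bound that controls the Dirichlet form $\int (\partial_{\psi_1^*}(f_N/h))^2/(f_N/h)\,g_{\mu_l}\,d\psi$. The only points requiring care here are the bookkeeping of the boundary coefficients in (\ref{fluct-diss}) and confirming that the mixed $\mathcal{L}$-term integrates to zero against the stationary state; once these are checked, the proposition follows immediately. I do not expect a substantive obstacle beyond correctly tracking these constants, since the two decisive estimates are already in hand.
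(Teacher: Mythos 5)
Your proposal is correct and follows essentially the same route as the paper: take the stationary expectation of the fluctuation--dissipation relation for the boundary bond, use stationarity to kill the generator term, and bound the two remaining terms via the current estimate (\ref{limcurr}) and the correlator decay (\ref{prop1}). If anything, your use of the boundary form of (\ref{fluct-diss}), which involves only $\cE_{1,3}$, is slightly tidier than the paper's displayed identity, which also carries an $\langle\cE_{2,4}\rangle$ term not literally covered by (\ref{prop1}).
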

\begin{proof}
By (\ref{fluct-diss})
\begin{equation}
\gamma \langle j_{12}\rangle +\frac{1}{2} \langle(\cE_{2,4}-\cE_{1,3}) \rangle=\langle \rho_2\rangle -\langle \rho_1 \rangle
\end{equation}
 then by (\ref{limcurr}) and (\ref{prop1}) we have
\begin{equation}
\lim_{N\rightarrow \infty}(\langle \rho_1\rangle-\langle \rho_2 \rangle)=0
\end{equation}
and similarly for $x=N-1$.
\end{proof}
Then we have
\begin{equation}
\begin{split}
&\lim_{N\rightarrow\infty}\langle \rho_1\rangle=2\mu_l\\
&\lim_{N\rightarrow\infty}\langle \rho_N\rangle=2\mu_r
\end{split}
\end{equation}
and the Fourier's law is
\begin{equation}
\begin{split}
\lim_{N\rightarrow \infty} N \langle j_{x,x+1}\rangle = \frac{2}{\gamma}(\mu_r-\mu_l).
\end{split}
\end{equation}

\subsection{Average ``mass" density}

We define a function
\begin{equation}
\phi(x)=\frac{1}{\gamma}(\langle \rho_x\rangle-\frac{1}{2}\langle \cE_{x-1,x+1} \rangle),
\end{equation}
by the continuity equation for $x=2,...,N-1$
\begin{equation}
\cL\rho_x=j_{x-1,x}-j_{x,x+1}
\end{equation}
and the fluctuation-dissipation equation
\begin{equation}
\langle j_{x,x+1}\rangle=-\nabla^* \phi(x)
\end{equation}
we can write
\begin{equation}
\triangle \phi(x)=0 \; \mbox{  for } x=2,...,N-1.
\end{equation}
By the discrete maximum principle 
$ |\phi(x)|\leq max\{ \phi(2),\phi(N-1)
\}$
and using the definition of currents (\ref{fluct-diss})
\begin{equation}
\begin{split}
\phi(2)&=(\frac{4 \gamma+ \delta}{4\gamma})\langle j_{12}\rangle
+\langle \rho_1\rangle\\
\phi(N-1)&=-(\frac{4 \gamma+ \delta}{4\gamma})\langle j_{N-1,N}\rangle
+\langle \rho_N\rangle\\
\end{split}
\end{equation}
both with (\ref{bordi}), so, given that $\langle \rho_x\rangle \leq
2(\mu_l+\mu_r)$ for $x=1,N-1$  
it follows 
$$|\phi(x)|\leq \frac{4 \gamma+ \delta}{4\gamma}|\langle j_{x,x+1}\rangle|
+2(\mu_l+\mu_r)$$
In view of (\ref{limcurr}), it follows that
\begin{equation}
|\phi(x)|\leq \frac{C}{N}+2(\mu_l+\mu_r)  \; \mbox{  for } x=1,...,N-1.
\end{equation}
Furthermore, given the results of the previous section, propositions
\ref{prop1} and \ref{prop2}, and the explicit expression of $\phi$
\begin{equation}
\lim_{N\rightarrow \infty} \frac{1}{N}\sum_{x=1}^{N-1}\phi(x)=\frac{1}{\gamma}(\mu_l+\mu_r):
\end{equation}
then for $$M_N(\psi)=\sum_{x=1}^{N-1}\rho_x$$
we obtain the result
\begin{equation}
\lim_{N\rightarrow\infty}\frac{1}{N}\sum_{x=1}^{N-1}\langle M_N(\psi)\rangle=(\mu_l+\mu_r).
\end{equation}

We can verify that, at equilibrium, the two thermostats must
have the same chemical potentials.

 \begin{prop}
If $\langle j_{x,x+1} \rangle=0$ then $\mu_l=\mu_r$. 
\end{prop}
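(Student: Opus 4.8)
The plan is to read off $\mu_l=\mu_r$ directly from the two exact finite-$N$ identities already obtained for the stationary state, namely the boundary expression for the current and the closed formula for $\langle j_{x,x+1}\rangle$ in terms of $\langle\rho_1\rangle$. First I would use the definition of the left boundary current $j_{0,1}=2\mu_l-\rho_1$ from (\ref{cur}) together with the stationarity relation (\ref{statio}), which gives $\langle j_{0,1}\rangle=\langle j_{x,x+1}\rangle$. Under the hypothesis $\langle j_{x,x+1}\rangle=0$ this forces the first determination
\begin{equation}
\langle\rho_1\rangle=2\mu_l .
\end{equation}

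Next I would invoke the exact formula derived in the scaling-of-the-current subsection,
\begin{equation}
\langle j_{x,x+1}\rangle=\frac{2}{\gamma(N-3)+4\gamma+\delta}\bigl((\mu_l+\mu_r)-\langle\rho_1\rangle\bigr),
\end{equation}
which is an algebraic consequence of the fluctuation–dissipation relation (\ref{fluct-diss}) and holds for every finite $N$. Since the prefactor $\gamma(N+1)+\delta$ is strictly positive, setting the left-hand side to zero yields the second determination $\langle\rho_1\rangle=\mu_l+\mu_r$. Combining the two expressions gives $2\mu_l=\mu_l+\mu_r$, hence $\mu_l=\mu_r$. As a consistency check, (\ref{bordi}) then also forces $\langle\rho_N\rangle=2\mu_r=2\mu_l$, in agreement with the analogous right-boundary identity $j_{N,N+1}=-(2\mu_r-\rho_N)$.

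The argument is essentially a two-line computation, so there is no serious analytic obstacle; the only point requiring care is to use the exact finite-$N$ current formula (valid for each $N$) rather than its $N\to\infty$ asymptotics, and to note that the two determinations of $\langle\rho_1\rangle$ are genuinely independent — one coming from the left reservoir through $j_{0,1}$, the other from the bulk fluctuation–dissipation balance.

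Alternatively, one can argue through the entropy-production identity of the previous subsection: the production equals $\delta(\mu_l-\mu_r)\langle j_{x,x+1}\rangle$ and is simultaneously a sum of non-negative Dirichlet forms, so $\langle j_{x,x+1}\rangle=0$ forces every form to vanish, i.e.\ the stationary state is reversible. In that route the harder step would be to show that the vanishing of the phase Dirichlet forms (so that $f_N$ depends only on the moduli $|\psi(x)|^2$), together with the two boundary constraints $\partial_{\psi(N)}f_N=0$ and $\partial_{\psi(1)}(f_N/h)=0$, propagates through the bulk Hamiltonian and noise dynamics to pin down a single common chemical potential; this is precisely why I would prefer the direct algebraic route above, which avoids analyzing the structure of $f_N$ altogether.
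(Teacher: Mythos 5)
Your proof is correct, and it reaches the conclusion by a genuinely shorter route than the paper, built from the same ingredients but packaged differently. The paper's own proof works entirely under the zero-current hypothesis: it extracts \emph{both} boundary identities $\langle\rho_1\rangle=2\mu_l$ and $\langle\rho_N\rangle=2\mu_r$ from (\ref{cur}) and (\ref{statio}), then telescopes the stationary fluctuation--dissipation relation $\langle\rho_{x+1}\rangle-\langle\rho_x\rangle=\frac{1}{2}\left(\langle\cE_{x,x+2}\rangle-\langle\cE_{x-1,x+1}\rangle\right)$ through the bulk from each end, producing two expressions for the same interior density which can agree only if $\mu_l=\mu_r$ (in the paper's displayed formulas $\mu_l$ and $\mu_r$ are interchanged, an evident typo). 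You instead use only the left identity $\langle\rho_1\rangle=2\mu_l$ and combine it with the closed finite-$N$ formula $\langle j_{x,x+1}\rangle=c_N\left((\mu_l+\mu_r)-\langle\rho_1\rangle\right)$, $c_N>0$, from the scaling subsection; since that formula is derived (from the telescoped fluctuation--dissipation relations together with (\ref{bordi})) \emph{without} assuming the current vanishes, there is no circularity, and the two determinations $\langle\rho_1\rangle=2\mu_l$ and $\langle\rho_1\rangle=\mu_l+\mu_r$ clash unless $\mu_l=\mu_r$. What each approach buys: yours reuses earlier work, so the proposition becomes a two-line corollary, and it needs only the positivity of the prefactor $c_N$, which makes it robust to the arithmetic slips in the paper's constants (e.g.\ the denominator $\gamma(N-3)+4\gamma+\delta$ versus $\gamma(N-3)+(4\gamma+\delta)/2$); the paper's recursion is self-contained, never invokes the closed current formula, and as a by-product exhibits the equilibrium bulk profile $\langle\rho_x\rangle=2\mu_l+\frac{1}{2}\langle\cE_{x-1,x+1}\rangle$. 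Your entropy-production alternative is indeed the harder road, for exactly the reason you give, and you are right to set it aside.
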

\begin{proof}
By the stationarity and  by (\ref{cur}), we can write the average
densities at the extremities in $x=1$ and $x=N$
\begin{equation}
\begin{split}
\langle \rho_1 \rangle& = 2\mu_l\\
\langle \rho_N \rangle &=2\mu_r
\end{split}
\end{equation}
and by (\ref{fluct-diss}) in the bulk, $x=1,...,N-1$
\begin{equation}
\langle \rho_{x+1}\rangle-\langle \rho_{x}\rangle
=\frac{1}{2}\left( \langle \cE_{x,x+2}\rangle -\langle \cE_{x-1,x+1} \rangle\right) 
\end{equation}
Then substituting recursively the extremity density value in $1$, we find
\begin{equation}
\langle \rho_x \rangle =2\mu_r +\frac{1}{2}\langle \cE_{x-1,x+1}\rangle
\end{equation}  
and similarly when substituting the $N$ density value
\begin{equation}
\langle \rho_x \rangle =2\mu_l +\frac{1}{2}\langle \cE_{x-1,x+1}\rangle
\end{equation} 
so that $\mu_r=\mu_l$.
\end{proof}

\subsection{Non Linear Case}

When the Hamiltonian is nonlinear, $p>1$, the current doesn't decompose in \textit{fluctuation-dissipation} terms:
\begin{equation}
j_{x,x+1}=-\cL_N j_{x,x+1}+\nabla \rho_x+\cE_{x-1,x+1}+\cE_{x,x+1}+\cE_{x,x+1}|\psi_x|^{p-1}.
\end{equation} 
Being a non gradient system a correction term in the second order approximation of a local Gibbs measure in the relative entropy method should be added. Unfortunately the non linearity made it for us impossible to find the proper correction term which would gauge the system in the local averages.





\section*{Acknowledgement}

I would like to thank Stefano Olla for posing me this problem, for the 
helpful suggestions and the interests in this work.
\nocite{*}
\bibliographystyle{plain}	
\bibliography{fourierbib}







\end{document}